\title{Flow polynomials of a signed graph}
\author{Jianguo Qian\footnote{email address: jgqian@xmu.edu.cn}\\
\small  School of Mathematical Sciences,  Xiamen University\\
\small  Xiamen, Fujian 361005, P.R. China }
\date{}
\begin{document}
\maketitle
\newtheorem{lem}{Lemma}[section]
\newtheorem{thm}[lem]{Theorem}
\newtheorem{prop}[lem]{Proposition}
\newtheorem{cor}[lem]{Corollary}
\newtheorem*{pf}{Proof}
\begin{abstract}
In contrast to ordinary graphs, the  number of the nowhere-zero group-flows in a signed graph may vary with different groups, even if the groups have the same order. In fact, for a signed graph $G$ and non-negative integer $d$, it was shown that there exists a polynomial $F_d(G,x)$ such that the number of the nowhere-zero $\Gamma$-flows in $G$ equals $F_d(G,x)$ evaluated at $k$ for every Abelian group $\Gamma$ of order $k$ with $\epsilon(\Gamma)=d$, where $\epsilon(\Gamma)$ is the largest integer $d$ for which $\Gamma$ has a subgroup isomorphic to $\mathbb{Z}^d_2$. We focus on the combinatorial structure of  $\Gamma$-flows in a signed graph and the coefficients in $F_d(G,x)$. We first define the fundamental directed circuits for a signed graph $G$ and show that all $\Gamma$-flows (not necessarily nowhere-zero) in $G$ can be generated by these circuits. It turns out that all $\Gamma$-flows in $G$ can be evenly classified into $2^{\epsilon(\Gamma)}$-classes specified by the elements of order 2 in $\Gamma$, each class of which consists of the same number of flows depending only on the order of the group. This gives an explanation for why the number of $\Gamma$-flows in a signed graph varies with different $\epsilon(\Gamma)$, and also gives an answer to a problem posed by Beck and Zaslavsky. Secondly, using an extension of  Whitney's broken circuit theory we give a combinatorial interpretation of the coefficients in $F_d(G,x)$ for $d=0$, in terms of the broken bonds. As an example, we give an analytic expression of $F_0(G,x)$ for a class of the signed graphs that contain no balanced circuit. Finally, we show that the sets of edges  in a signed graph that contain no broken bond form a  homogeneous  simplicial complex.
\end{abstract}
\noindent\textbf{Keywords:} signed graph; nowhere-zero flow; polynomial; coefficient\\
\noindent\textbf{AMS classification:} 05C21; 05C22; 05C31

\section{Introduction}
Nowhere-zero  $\mathbb{Z}_k$-flows, or modular $k$-flows, in a graph was initially introduced by Tutte \cite{Tutte02} as a dual problem to vertex-colouring of plane graphs. It has long been known that the number of nowhere-zero $\mathbb{Z}_k$-flows or more in general,  nowhere-zero $\Gamma$-flows (flows with values in $\Gamma$) for an Abelian group $\Gamma$ of order $k$ is a polynomial function in $k$, which does not depend on the algebraic structure of the group \cite{Tutte02}. An analog to $\mathbb{Z}_k$-flow  is the integer $k$-flow or, simply $k$-flow. It is well known that  a graph has a nowhere-zero $k$-flow if and only if it has a nowhere-zero $\mathbb{Z}_k$-flow \cite{Tutte01}. In \cite{Kochol}, Kochol showed that the number of nowhere-zero $k$-flows is also a polynomial in $k$, although not the same polynomial as that for nowhere-zero $\mathbb{Z}_k$-flows. For more topics related to nowhere-zero flows in graphs see also Brylawski and Oxley \cite{Brylawski03}, Jaeger \cite{Jaeger}, Seymour \cite{Seymour} and Zhang \cite{Zhang}.

The notion of the signed graphs was introduced by Harary \cite{Harary} initially as a model for social networks. In comparison with flows in plane graphs or more generally, in graphs embedded on orientable surface, the definition of $\mathbb{Z}_k$-flows in signed graphs is naturally considered for the study of graphs  embedded on non-orientable surface, where nowhere-zero  $\mathbb{Z}_k$-flows emerge as the dual notion to local tensions \cite{Kaiser}.

In contrast to ordinary graphs, the problem of counting the nowhere-zero flows in a signed graph seems more complicated and there are relatively few results to be found in the literatures. Using the theory of counting lattice points in inside-out polytopes to signed graphs, Beck and Zaslavsky  \cite{beck01} showed that the number of the nowhere-zero $k$-flows in a signed graph is a quasi-polynomial of period two, that is, a pair of polynomials, one for odd values of $k$ and the other for even $k$.  In the same paper, Beck and Zaslavsky also showed that there exists a polynomial $f(G,x)$ such that, for every odd integer $k$, the number of nowhere-zero $\Gamma$-flows in a signed graph $G$ equals $f(G,x)$ evaluated at $k$ for every Abelian group $\Gamma$ with $|\Gamma|=k$. This result was recently  extended by DeVos, Rollov\'{a} and \v{S}\'{a}mal \cite{DeVos} (available from arXiv) to general Abelian group: for any non-negative integer $d$, there exists a polynomial $f_d(G,x)$ such that the number of nowhere-zero $\Gamma$-flows in $G$ is  exactly $f_d(G,x)$ evaluated at $n$ for every Abelian group $\Gamma$ with $\epsilon(\Gamma)=d$ and $|\Gamma|=2^dn$, where $\epsilon(\Gamma)$ is the largest integer $d$ for which $\Gamma$ has a subgroup isomorphic to $\mathbb{Z}^d_2$.  More recently,  Goodall et. al. \cite{Goodall} (available from arXiv) gave an explicit expression of $f_d(G,x)$  in form of  edge-subgraph expansions.

In this paper we focus on the combinatorial structure of  $\Gamma$-flows in a signed graph $G$ and the coefficients in the polynomial $f_d(G,x)$. For convenience,  instead of working on $f_d(G,x)$,  we will work on  the polynomial $F_d(G,x)$ defined by $F_d(G,x)=f_d(G,2^{-d}x)$ and call $F_d(G,x)$ the {\it $d$-type flow polynomial}, or simply, the {\it flow polynomial} of $G$. It can be seen that $F_d(G,x)$ evaluated at $k$  is  exactly the number of the nowhere-zero $\Gamma$-flows in $G$ for every Abelian group $\Gamma$ with $\epsilon(\Gamma)=d$ and $|\Gamma|=k$.

In the third section we introduce the fundamental directed circuits  and the fundamental  root circuit (a particular unbalanced circuit) in a signed graph $G$. We show that every $\Gamma$-flow (not necessarily nowhere-zero) in $G$ can be generated by these circuits, each of which is assigned with a proper $\Gamma$-flow. More specifically, the values of the flows assigned to the fundamental directed circuits are the elements in $\Gamma$ while the value to the fundamental  root circuit is an element of order 2 in the group $\Gamma$. Therefore, all $\Gamma$-flows in $G$ can be  evenly classified into $2^{\epsilon(\Gamma)}$-classes specified by  the elements of order 2 in $\Gamma$. Moreover, each class consists of the same number of flows, which depends only on the order of the group.  This gives an explanation for why the number of the $\Gamma$-flows in a signed graph  varies with different $\epsilon(\Gamma)$ and, also gives an answer to a problem posed by Beck and Zaslavsky in \cite{beck01}. Further, this result also yields an explicit expression of the polynomial $F_d(G,x)$ obtained earlier by  Goodall et. al.

 In the fifth section we give a combinatorial interpretation of the coefficients in $F_d(G,x)$ for $d=0$. To this end, we apply Whitney's broken circuit theory \cite{Whitney01}.  In the study of graph coloring,  one significance of Whitney's broken circuit theorem is that it gives a very nice  `cancellation' to reduce the terms in the chromatic polynomial (represented in the form of inclusion-exclusion) so that the remaining terms can not be cancellated out anymore and,
therefore, yield a combinatorial interpretation for the coefficients of the polynomial \cite{Brylawski01,Brylawski02}. Using an extended form of the Whitney's theorem given by Dohmen and Trinks \cite{Dohmen01}, we show that $F_0(G,x)$ is a polynomial with leading term $x^{m-n}$ and with its coefficients alternating in signs. More specifically, the coefficient of $(-1)^{i}x^{m-n-i},i=0,1,\cdots,m-n$, is exactly the number of the sets consisting of $i$ edges  that contain no broken bond.  As an example, we give an analytic expression of $F_0(G,x)$ for a class of the signed graphs that contain no balanced circuit. Finally, we show that the broken bonds in a signed graph form a nice topological structure, that is, a homogeneous simplicial complex of top dimension $m-n$. Thus, the coefficients of $F_0(G,x)$
are the simplex counts in each dimension of the complex.

\section{Preliminaries}

Graphs in this paper may contain parallel edges or loops.  For a graph $G$, we use $V(G)$ and $E(G)$ to denote its vertex set and edge set, respectively.  A {\it signed graph} is a pair $(G,\Sigma)$, where $\Sigma\subseteq E(G)$ and the edges in $\Sigma$ are negative while the other ones are positive.

A {\it circuit} is a connected 2-regular graph. An {\it unbalanced circuit} in a signed graph $(G, \Sigma)$ is a  circuit in $G$ that has an odd number of negative edges. A {\it balanced circuit} in $(G, \Sigma)$ is a  circuit that is not unbalanced. A subgraph of $G$ is {\it unbalanced} if it contains an unbalanced circuit; otherwise, it is {\it balanced}.  In particular, a subgraph without negative edges is balanced. A {\it barbell} is the union of two unbalanced circuits $C_1,C_2$ and a (possibly trivial) path $P$ with end vertices $v_1\in V(C_1)$ and $v_2\in V(C_2)$, such that
$C_1-v_1$ is disjoint from $P\cup C_2$ and $C_2-v_2$ is disjoint from $P\cup C_1$. We call $P$ the {\it barbell path} of the barbell. A {\it signed circuit} is either a balanced circuit or a barbell.

Given a signed graph $(G,\Sigma)$, {\it switching} at a vertex $v$ is the inversion of the sign of each edge incident with $v$. Two signed graphs are said to be  {\it switching-equivalent} if one can be obtained from the other by a series of switchings. It is known \cite{Kaiser} and easy to see that equivalent signed graphs have the same sets of unbalanced circuits and the same sets of balanced circuits. This means, in particular, that a balanced signed graph $(G,\Sigma)$ is switching-equivalent to an ordinary graph $G$.

Following Bouchet \cite{Bouchet} we now introduce the notion of the {\it half-edges} so as to orient
a signed graph: each negative edge of $(G,\Sigma)$ is viewed as composed of two half-edges. An orientation of a negative edge $e$ is obtained by giving each of the two half-edges $h$ and $h'$ a direction so that both $h$ and $h'$ point toward the end vertices of $e$, called  {\it extroverted}, or both $h$ and $h'$ point toward the inside of $e$, called {\it introverted}.

In the following, we will use $G$ simply to denote a signed graph if no confusion can occur. Let $D$ be a fixed orientation of a signed graph $G$ and $\Gamma$ be an additive Abelian group. A map ${\bf f}:E(D)\rightarrow \Gamma$ is called a $\Gamma$-flow  if   the usual conservation law (Kirchhoff's law) is satisfied, that is, for each vertex $v$, the sum of ${\bf f}(e)$ over the incoming edges $e\in E^-(v)$ at $v$ (i.e., the edges and half-edges directed towards $v$) equals the sum of ${\bf f}(e)$ over the outgoing edges $e\in E^+(v)$, i.e.,
$$\sum_{e\in E^+(v)}{\bf f}(e)=\sum_{e\in E^-(v)}{\bf f}(e).$$

 A flow ${\bf f}$ is called nowhere-zero if ${\bf f}(e)\not=0$ for each $e\in E(D)$. It is well known that the number of nowhere-zero
$\Gamma$-flows is independent of  the orientation of $G$. A signed graph is said to be $\Gamma$-{\it flow admissible}  if it admits at least one nowhere-zero $\Gamma$-flow. It is clear that the property of `$\Gamma$-flow admissible' is invariant under switching inversion.

\section{Fundamental  circuits in a signed graph}

In this section we generalize the notion of fundamental circuits in graphs to signed graphs, which will play an important role in revealing the structural property of $\Gamma$-flows in signed graphs.

For a signed graph $G$ and a set $F$ of edges, we denote by $G+F$ and $G-F$ the subgraphs obtained from $G$ by adding and deleting the edges in $F$, respectively. Let $E_N=\{e_0,e_1,e_2,\cdots,e_{m_N-1}\}$ be the set of all the negative edges of $G$, where $m_N=|E_N|$. In this section we always assume that $G$ is unbalanced and, with no loss of generality, contains as few negative edges as possible in its switching equivalent class. Thus, $E_N\not=\emptyset$ and $G-E_N$ is connected \cite{Zaslavsky}.

Let $T$ be a spanning tree of $G-E_N$. Choose an arbitrary edge $e_0$ from $E_N$ and call $T_{0}=T+e_0$  a {\it signed rooted tree} of $G$ with root edge $e_0$ (note that a signed rooted tree we defined here is not a real tree because it has a unique unbalanced circuit). Let $\overline{T}_0=E(G)\setminus E(T_0)$. For any $e\in \overline{T}_0$, it is clear that $T_{0}+e$ contains a unique signed circuit. We call this circuit a {\it fundamental circuit} and denote it by $C_e$. We can see that, if $e\in\overline{T}_0\setminus E_N$ then $C_e$ is an ordinary circuit (a circuit without negative edge) and if  $e\in E_N\setminus\{e_0\}$ then $C_e$ is a barbell or a balanced circuit with two negative edges $e_0$ and $e$.

 Given a fixed orientation $D$ of $G$, a {\it fundamental directed circuit} $\overrightarrow{C}_e$ of $G$ is the orientation of a fundamental circuit $C_e$ such that the direction of $e$ is the same as what it was in $D$ and  the directions of all other edges on $\overrightarrow{C}_e$ coincide consistently with $e$ along with $C_e$. Under this orientation, it can be seen that if $C_e$ is an ordinary circuit then an edge $e'$ on  $\overrightarrow{C}_e$ is clockwise oriented if and only $e$ is  clockwise oriented, and if $C_e$ is a balanced circuit or a barbell (with two negative edges $e_0$ and $e$), then the direction of the two negative edges are always opposite, that is, $e_0$ is extroverted if and only if $e$ is introverted, see Figure 1.
\begin{figure}[htbp]
\begin{center}
\includegraphics[height=4.6cm]{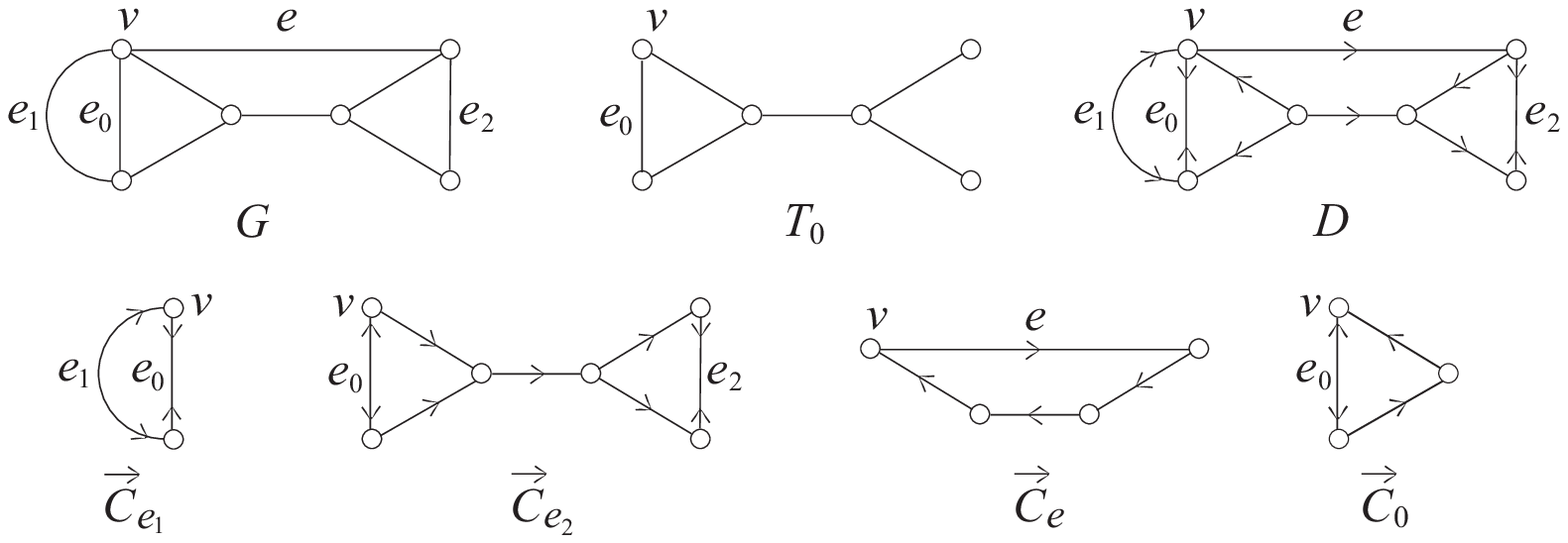}
{\bf Figure 1.}\,\ The edges $e_0,e_1,e_2$ are negative and $e$ is positive.
\end{center}
\end{figure}

 For an fundamental circuit $C_e$, let $C^D_e$ be the orientation $D$ restricted on $C_e$. We associate with ${C}_e$ a function ${\bf f}_{e}$ on $E(G)$ defined by
$${\bf f}_{e}(a)=\left\{\begin{array}{rll}
1,&& {\rm if}\  a\in \overrightarrow{C}_e;\\
-1,&& {\rm if}\  a\in C_e^D\setminus \overrightarrow{C}_e;\\
2,&& {\rm if}\  a\in \overrightarrow{C}_e\ {\rm and}\ a\ {\rm is\ on\ the\ barbell\ path\ of}\ C_e;\\
-2,&& {\rm if}\  a\in C_e^D\setminus \overrightarrow{C}_e\ {\rm and}\ a\ {\rm is\ on\ the\ barbell\ path\ of}\ C_e;\\
0,&& {\rm otherwise}
\end{array}\right.$$
for any $a\in E(D)$, where `$a$ is on the barbell path of $C_e$' means that $C_e$ is a barbell and $a$ is on the barbell path of $C_e$.

Form the above definition, it can be seen that ${\bf f}_e(e)=1$ for any $e\in \overline{T}_0$.

Let $C_0$ be the unique (un-balanced) circuit in $T_0$ (i.e., formed by $e_0$ and $T$). Choose an arbitrary vertex $v$ on $C_0$ and let $\overrightarrow{C}_0$ be the orientation of $C_0$ such that the direction of $e_0$ is extroverted  and all other edges on  $C_0$ are oriented so that $d^-(v)=2,d^+(v)=0$ and $d^-(u)=d^+(u)=1$ for any vertex $u$ on $C_0$ other than $v$,  where $d^-(v)$ and $d^+(v)$ are the in-degree and out-degree of $v$ on  $\overrightarrow{C}_0$, respectively, see Figure 1. We call $\overrightarrow{C}_0$ the {\it fundamental  root circuit} and associate it with a function ${\bf g}$ on $E(G)$ defined by
$${\bf g}(e)=\left\{\begin{array}{rll}
1,&& {\rm if}\  e\in \overrightarrow{C}_0;\\
-1,&& {\rm if}\  e\in C^D_0\setminus\overrightarrow{C}_0;\\
0,&& {\rm otherwise}
\end{array}\right.$$
for any $e\in E(D)$.

 For convenience, in the following we regard each $\Gamma$-flow, each function ${\bf f}_e$ ($e\in\overline{T}_0$) and the function ${\bf g}$ as $m$-dimensional vectors indexed by $e\in E(G)$. Let ${\cal S}_G$ denote the class of all $\Gamma$-flows (not necessarily nowhere-zero) in $G$.

For a finite additive Abelian group $\Gamma$, let $\Gamma_2$ be the set of the elements of order 2 in $\Gamma$ (including the zero element). Recalling that $\epsilon(\Gamma)$ is the largest integer $d$ for which $\Gamma$ has a subgroup isomorphic to $\mathbb{Z}^d_2$, we have $|\Gamma_2|=2^{\epsilon(\Gamma)}$.

\begin{thm}\label{basis} Let $\Gamma$ be an additive Abelian group and let $G$ be a connected unbalanced signed graph. Let $T$ be a spanning tree $T$ of  $G$ consisting of positive edges and let $e_0\in E_N$. Then\\
\begin{equation}
{\cal S}_G=\{\gamma{\bf g}+\sum_{e\in \overline{T}_0}\gamma_e{\bf f}_{e}:\gamma\in\Gamma_2,\gamma_e\in\Gamma\}.
\end{equation}
 \end{thm}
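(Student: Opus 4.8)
The plan is to prove the two set inclusions separately, reading the right-hand side as the set of all $\Gamma$-combinations of the generators $\{\mathbf{f}_e:e\in\overline{T}_0\}$ and $\mathbf{g}$, where the coefficient attached to $\mathbf{g}$ is constrained to lie in $\Gamma_2$. Throughout I would use that $\mathcal{S}_G$ is closed under addition and under multiplication by elements of $\Gamma$, so it suffices, for one inclusion, to check that every generator is a flow, and for the other, to decompose an arbitrary flow against the generators.

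For the inclusion $\supseteq$, I would verify Kirchhoff's law at each vertex for the individual generators. For $\mathbf{f}_e$ this splits into the three cases recorded before the statement: when $C_e$ is an ordinary circuit the check is the classical one; when $C_e$ is a balanced circuit whose two negative edges are $e_0$ and $e$, the opposite (introverted versus extroverted) orientation of these edges is exactly what makes conservation hold at their endpoints; and when $C_e$ is a barbell, the entries $\pm 2$ on the barbell path are forced by conservation at the two junction vertices, where unit flow arriving from both arcs of an unbalanced end-circuit must be funneled into the single path. For $\gamma\mathbf{g}$ the only non-routine vertex is the distinguished vertex $v$ of $C_0$, where $d^-(v)=2$ and $d^+(v)=0$: there conservation reads $2\gamma=0$, so $\gamma\mathbf{g}$ is a flow precisely when $\gamma\in\Gamma_2$. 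This simultaneously verifies the generator and accounts for the restriction $\gamma\in\Gamma_2$ in the statement.

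For the inclusion $\subseteq$, I would start from an arbitrary $\mathbf{f}\in\mathcal{S}_G$ and recover the coefficients from its values on the chords. Since each $C_e$ uses only edges of $T_0$ together with the single chord $e$, we have $\mathbf{f}_e(e')=\delta_{ee'}$ for chords $e,e'\in\overline{T}_0$, while $\mathbf{g}$ is supported on $C_0\subseteq T_0$ and hence vanishes on every chord; so setting $\gamma_e:=\mathbf{f}(e)$ makes the difference $\mathbf{f}':=\mathbf{f}-\sum_{e\in\overline{T}_0}\gamma_e\mathbf{f}_e$ a $\Gamma$-flow that vanishes on all chords, i.e.\ is supported inside $T_0$. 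Now $T_0=T+e_0$ is just the circuit $C_0$ with positive pendant trees attached, so peeling leaves one at a time and applying Kirchhoff's law at each degree-one vertex forces $\mathbf{f}'$ to vanish on every pendant edge, leaving $\mathbf{f}'$ supported on $C_0$. Reading $\mathbf{f}'$ in the orientation $\overrightarrow{C}_0$, conservation at each ordinary vertex of $C_0$ (where $d^-=d^+=1$) forces consecutive edge-values to coincide, so $\mathbf{f}'=\gamma\mathbf{g}$ for a single $\gamma\in\Gamma$; and conservation at $v$ again yields $2\gamma=0$, so $\gamma\in\Gamma_2$. This exhibits $\mathbf{f}=\gamma\mathbf{g}+\sum_{e}\gamma_e\mathbf{f}_e$ in the required form.

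The main obstacle is the bookkeeping around the unbalanced behaviour at both ends of the argument: checking that the barbell generators really satisfy the conservation law (the origin of the weight-$2$ entries) and, dually, pinning down that a flow supported on the single unbalanced circuit $C_0$ must be a constant multiple of $\mathbf{g}$ whose scalar is annihilated by $2$. Both points rest on the same phenomenon, namely the distinguished source/sink vertex produced by the odd number of negative edges, so I would isolate a short lemma describing the flows carried by one unbalanced circuit and invoke it in each inclusion.
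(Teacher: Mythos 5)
Your proposal is correct, and for the hard inclusion it takes a genuinely different route from the paper. The paper first derives the global relation $2{\bf f}(e_0)=2\sum_{e_i\in E^*_N}{\bf f}(e_i)$ by an auxiliary construction (inserting a middle vertex on every negative edge and an apex $w$ joined to all of them, then reading off conservation at $w$), solves $2x=2\gamma$ over $\Gamma$ to introduce the order-two ambiguity $\gamma_2$, and only then forms ${\bf f}'$ and checks by direct computation that it vanishes on $e_0$ as well as on the chords, concluding ${\bf f}'={\bf 0}$ since values on $T$ are determined by values off $T$; this requires a normalizing assumption on the orientation of the negative edges. You instead subtract only the chord contributions, note the residue is a flow supported on $T_0=T+e_0$, and characterize such flows intrinsically: leaf-peeling kills the pendant trees, conservation at the ordinary vertices of $C_0$ forces a constant value $\gamma$ around the circuit, and conservation at the distinguished vertex $v$ (where $d^-(v)=2$, $d^+(v)=0$) forces $2\gamma=0$. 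Your argument is more local and elementary, needs no auxiliary graph or orientation normalization, and makes the appearance of $\Gamma_2$ transparent as exactly the condition for $\gamma{\bf g}$ to satisfy Kirchhoff's law at $v$ --- which also settles the easy inclusion that the paper dismisses as clear. What the paper's route buys in exchange is the explicit identity ${\bf f}(e_0)=\gamma_2+\sum_{e_i\in E^*_N}{\bf f}(e_i)$, showing how the order-two element mediates between the values on the negative edges, a viewpoint echoed later in Remark 1. Both proofs rely on the same uniqueness principle (a flow vanishing off $T$ vanishes everywhere, respectively a flow supported on one unbalanced circuit is $\gamma{\bf g}$ with $\gamma\in\Gamma_2$), so your suggested lemma isolating the flows carried by a single unbalanced circuit is exactly the right thing to extract.
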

\begin{proof}  It is clear that
\begin{equation}
\gamma{\bf g}+\sum_{e\in \overline{T}_0}\gamma_e{\bf f}_{e}
 \end{equation}
 is a $\Gamma$-flow for any $\gamma\in\Gamma_2$ and $\gamma_e\in\Gamma$. Let ${\bf f}$ be an arbitrary $\Gamma$-flow in $G$. We need only prove that  ${\bf f}$ can be written as the combination  (2).

Since a $\Gamma$-flow is independent of the orientation $D$, to simplify our discussion we make the following assumption:

\noindent{\bf Assumption 1}. In orientation $D$, the direction of  the root edge $e_0$ is extroverted while the directions of all other negative edges are introverted.

 For each negative edge $e_i=u_iv_i\in E_N$, insert a new vertex $w_i$ into the middle of $e_i$ so that the two half edges of $e_i$ in $D$ become two ordinary directed edges $w_iu_i$ (with direction from $w_i$ to $u_i$) and $w_iv_i$  if $i=0$, or $u_iw_i$ and $v_iw_i$ if $i\in\{1,2,\cdots,m_N-1\}$. We call $w_i$ the {\it middle vertex} of $e_i$.

Further, add a new vertex $w$ to $D$ and, for each middle vertex $w_i$, add the directed edge $e'_i=ww_i$ if $i=0$ and the directed edge $e'_i=w_iw$ if $i\in\{1,2,\cdots,m_N-1\}$. The resulting graph, denoted by $D^w$,  is a directed graph without negative edges, that is, $D^w$ is an ordinary directed graph. Further, for $i\in\{0,1,2,\cdots,m_N-1\}$, assign the edge $e'_i$ with the value $2{\bf f}(e_i)$.
It clear that, except the possible $w$, the conservation law is satisfied at all the vertices in $D^w$ and therefore must be satisfied at $w$,  either. As a result, we get a span of the $\Gamma$-flow ${\bf f}$ to $D^w$ and denote it by ${\bf f}^w$. Thus, by the conservation law at $w$, we have
\begin{equation*}
{\bf f}^w(e'_0)=\sum_{i=1}^{m_N-1}{\bf f}^w(e'_i)
\end{equation*}
or equivalently,
\begin{equation}
2{\bf f}(e_0)=\sum_{i=1}^{m_N-1}2{\bf f}(e_i)=2\sum_{e_i\in E^*_N}{\bf f}(e_i),
\end{equation}
where $E^*_N=E_N\setminus\{e_0\}=\{e_1,e_2,\cdots,e_{m_N-1}\}$.

Further,  we notice that, for any $\gamma\in\Gamma$, the solution of the equation $2x=2\gamma$ (in $x$) over $\Gamma$ has the form $x=\gamma+\gamma_2$, where  $\gamma_2$ is an element of order 2 (possibly the zero element), i.e., $\gamma_2\in \Gamma_2$.  Thus, (3) is equivalent to
\begin{equation}
{\bf f}(e_0)=\gamma_2+\sum_{e_i\in E^*_N}{\bf f}(e_i),
\end{equation}
where $\gamma_2\in \Gamma_2$.

 On the other hand, for any $e\in E^*_N$, by Assumption 1 and the  definitions of $\overrightarrow{C}_e$  and ${\bf f}_e$,  we have
\begin{equation}
{\bf f}_e(e_0)={\bf f}_e(e_i)=1.
\end{equation}

In (2), we set $\gamma=\gamma_2$ and for $e\in \overline{T}_0$, set $\gamma_e={\bf f}(e)$. Let
\begin{equation}{\bf f'}={\bf f}-(\gamma_2{\bf g}+\sum_{e\in \overline{T}_0}\gamma_e{\bf f}_{e}).
 \end{equation}
 Then for any $e\in \overline{T}_0$, by the definition of the vector ${\bf g}$ we have $\gamma_2{\bf g}(e)=0$ since $e$  is not on $C_0$. This implies that ${\bf f}'(e)=0$ for any $e\in \overline{T}_0$ because $\gamma_e={\bf f}(e)$ and, as mentioned earlier, ${\bf f}_e(e)=1$. Further, by (4), (5) and (6) we have
$$\begin{array}{rll}
{\bf f}'(e_0)&=&{\bf f}(e_0)-(\gamma_2{\bf g}(e_0)+\sum\limits_{e\in \overline{T}_0}\gamma_e{\bf f}_e(e_0))\\
&=&\gamma_2+\sum\limits_{e\in E^*_N}{\bf f}(e)-(\gamma_2{\bf g}(e_0)+\sum\limits_{e\in \overline{T}_0\setminus E_N^*}\gamma_e{\bf f}_e(e_0)+\sum\limits_{e\in E_N^*}\gamma_e{\bf f}_e(e_0))\\
&=&\sum\limits_{e\in E^*_N}{\bf f}(e)-\sum\limits_{e\in E_N^*}\gamma_e{\bf f}_e(e_0)\\
&=&\sum\limits_{e\in E^*_N}{\bf f}(e)(1-{\bf f}_e(e_0))\\
&=&0,
\end{array}$$
where the third equality holds because ${\bf g}(e_0)=1$ and $e_0\notin C_e$  for any $e\in \overline{T}_0\setminus E_N^*$ and therefore,  ${\bf f}_e(e_0)=0$; and the last two equalities hold because of (5) and $\gamma_e={\bf f}(e)$ for any $e\in E^*_N$.

The above discussion means that ${\bf f'}$  evaluated at each edge outside of $T$ is zero.   Thus, we must have ${\bf f'}={\bf 0}$  (the vector of all zeros) because the values of ${\bf f}'$ at the edges of $T$ are uniquely determined by that  outside of $T$. In conclusion, ${\bf f}$ is represented as the combination (2), which completes our proof.
\end{proof}

\section{Classification of $\Gamma$-flows in a signed graph}

 From Theorem \ref{basis}, we have known that all the $\Gamma$-flows in a connected unbalanced signed graph can be `generated' by fundamental root circuit $\overrightarrow{C}_0$ and the fundamental directed circuits $\overrightarrow{C}_e,e\in \overline{T}_0$. This leads to the following classification of $\Gamma$-flows in a signed graph, which are specified by the elements of order 2 in $\Gamma$.

\begin{thm}\label{partition} Let $\Gamma$ be an additive Abelian group of order $k$ and let $G$ be a connected unbalanced signed graph. Let $T$ be a spanning tree $T$ of  $G$ consisting of positive edges and let $e_0\in E_N$.\\
1). The flows in ${\cal S}_G$ are pairwise distinct and, therefore,
\begin{equation}
|{\cal S}_G|=2^{\epsilon(\Gamma)}k^{m-n};
\end{equation}
2). ${\cal S}_G$ can be evenly classified into $|\Gamma_2|$ classes specified by the elements in $\Gamma_2$, i.e.,
${\cal S}_G=\bigcup_{\gamma\in\Gamma_2}{\cal S}_G(\gamma)\ \ \ {\rm and}\ \ \ |{\cal S}_G(\gamma)|=k^{m-n}\ {\rm for\ any}\ \gamma\in \Gamma_2,$
where
\begin{equation}
{\cal S}_G(\gamma)=\{\gamma{\bf g}+\sum_{e\in \overline{T}_0}\gamma_e{\bf f}_{e}:\gamma_e\in\Gamma\}.
\end{equation}
\end{thm}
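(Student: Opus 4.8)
The plan is to deduce both parts directly from Theorem~\ref{basis}, which already identifies $\mathcal{S}_G$ as the set of all vectors $\gamma\mathbf{g}+\sum_{e\in\overline{T}_0}\gamma_e\mathbf{f}_e$ with $\gamma\in\Gamma_2$ and $\gamma_e\in\Gamma$. What remains is a counting statement, so the heart of the argument is to show that this representation is \emph{unique}: the map $\Phi$ sending a tuple $(\gamma,(\gamma_e)_{e\in\overline{T}_0})\in\Gamma_2\times\Gamma^{\overline{T}_0}$ to the flow $\gamma\mathbf{g}+\sum_{e\in\overline{T}_0}\gamma_e\mathbf{f}_e$ is injective. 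Since $T$ is a spanning tree of the connected graph $G$ and $T_0=T+e_0$, we have $|E(T_0)|=n$ and hence $|\overline{T}_0|=m-n$; together with $|\Gamma_2|=2^{\epsilon(\Gamma)}$ this makes the domain of $\Phi$ have exactly $2^{\epsilon(\Gamma)}k^{m-n}$ elements, so injectivity of $\Phi$ immediately yields (7).

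For the injectivity I would exploit how the generating vectors behave on the chords in $\overline{T}_0$. Suppose $\Phi(\gamma,(\gamma_e))=\Phi(\gamma',(\gamma'_e))$ and evaluate the two flows at an arbitrary chord $e\in\overline{T}_0$. The key structural observations are: (i) for a chord $e'\neq e$ the fundamental circuit $C_{e'}$ lies inside $T_0+e'$ and therefore avoids $e$, so $\mathbf{f}_{e'}(e)=0$; (ii) $\mathbf{f}_e(e)=1$, as already noted after the definition of $\mathbf{f}_e$; and (iii) $\mathbf{g}$ is supported on $C_0\subseteq T_0$, so $\mathbf{g}(e)=0$. Substituting these into the equality of the two flows at $e$ collapses both sides to $\gamma_e=\gamma'_e$, and since $e$ was arbitrary this holds for every chord. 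The two sums then cancel, leaving $\gamma\mathbf{g}=\gamma'\mathbf{g}$; evaluating this at $e_0$, where $\mathbf{g}(e_0)=\pm1\neq0$, forces $\gamma=\gamma'$. Thus $\Phi$ is injective and part 1) follows.

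Part 2) is then a bookkeeping consequence. With $\mathcal{S}_G(\gamma)$ defined by (8), the union $\mathcal{S}_G=\bigcup_{\gamma\in\Gamma_2}\mathcal{S}_G(\gamma)$ is precisely the description of $\mathcal{S}_G$ furnished by Theorem~\ref{basis}. The classes are pairwise disjoint: a flow lying in both $\mathcal{S}_G(\gamma)$ and $\mathcal{S}_G(\gamma')$ would yield two representations with leading coefficients $\gamma$ and $\gamma'$, and the $\gamma=\gamma'$ half of the injectivity argument forces $\gamma=\gamma'$. Finally, for fixed $\gamma$ the restricted map $(\gamma_e)_{e\in\overline{T}_0}\mapsto\gamma\mathbf{g}+\sum_e\gamma_e\mathbf{f}_e$ is injective by the same chord-evaluation computation, so $|\mathcal{S}_G(\gamma)|=|\Gamma|^{|\overline{T}_0|}=k^{m-n}$ independently of $\gamma$; hence the partition is even.

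The only step requiring genuine care, rather than routine substitution, is justifying the incidence facts (i) and (iii): that the fundamental circuit of one chord never meets another chord, and that the root-circuit vector $\mathbf{g}$ is confined to $T_0$. Both are immediate from the construction of fundamental circuits, each consisting of a single chord together with tree edges of $T_0$, but I would state them explicitly, since the entire counting argument rests on the resulting `triangular' structure of $\Phi$ in the chord coordinates.
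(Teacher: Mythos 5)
Your proposal is correct and follows essentially the same route as the paper: both prove uniqueness of the representation by evaluating the flows at each chord $e\in\overline{T}_0$ (using ${\bf f}_e(e)=1$, ${\bf f}_{e'}(e)=0$ for $e'\neq e$, and ${\bf g}(e)=0$) to get $\gamma_e=\gamma'_e$, and then deduce $\gamma=\gamma'$ from $\gamma{\bf g}=\gamma'{\bf g}$, with part 2) following as immediate bookkeeping. Your explicit evaluation at $e_0$, where ${\bf g}(e_0)=\pm1$, is just a slightly more careful rendering of the paper's final step.
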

\begin{proof} 1). We need only prove that
\begin{equation}
\gamma{\bf g}+\sum_{e\in \overline{T}_0}\gamma_e{\bf f}_{e}=\gamma'{\bf g}+\sum_{e\in \overline{T}_0}\gamma'_e{\bf f}_{e}
\end{equation}
if and only if $\gamma=\gamma'$ and $\gamma_e=\gamma'_e$ for any $e\in \overline{T}_0$. For any $e\in \overline{T}_0$, by the definition of {\bf g} and {\bf f}$_e$ we have ${\bf f}_e(e)=1,{\bf g}(e)=0$ and ${\bf f}_{e'}(e)=0$ for any $e'\in \overline{T}_0$ with $e'\not=e$. Thus, (9) implies that $\gamma_e{\bf f}_e(e)=\gamma_e'{\bf f}_e(e)$ and therefore, $\gamma_e=\gamma_e'$ for any $e\in \overline{T}_0$. Consequently, again by (9), we have $\gamma{\bf g}=\gamma'{\bf g}$ and therefore, $\gamma=\gamma'$.

\noindent 2). Since the flows in ${\cal S}_G$ are pairwise distinct, 2) follows directly.\end{proof}

 For a component $\omega$ of  a signed graph $G$, denote
\begin{equation}
\beta(\omega)=\left\{\begin{array}{ccl}
m(\omega)-n(\omega)+1,&&{\rm if}\ \omega\ {\rm is\ balanced};\\
m(\omega)-n(\omega),&&{\rm if}\ \omega\ {\rm is\ unbalanced},
\end{array}\right.
\end{equation}
where $m(\omega)$ and $n(\omega)$ are the number of edges and vertices in $\omega$, respectively. In general, we denote $\beta(G)=\sum\beta(\omega)$, where the sum is taken over all the components  $\omega$ of $G$. Let $\kappa(G)$ be the number of unbalanced components and $F^*(G,\Gamma)$ be the number of  $\Gamma$-flows (not necessarily nowhere-zero) in $G$.
\begin{cor}\label{main}  Let $G$ be a signed graph and let $\Gamma$ be an additive Abelian group of order $k$. Then
\begin{equation}
F^*(G,\Gamma)=2^{\kappa(G)\epsilon(\Gamma)}k^{\beta(G)}.
\end{equation}
\end{cor}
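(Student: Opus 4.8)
The plan is to reduce the count to a product over the connected components of $G$ and then apply Theorem \ref{partition} componentwise. First I would observe that any $\Gamma$-flow on $G$ restricts to a $\Gamma$-flow on each component, and conversely an independent choice of flow on each component assembles into a flow on $G$; since distinct components share no edges and the conservation law at a vertex involves only the edges within that vertex's own component, these choices do not interact. Hence $F^*(G,\Gamma)=\prod_\omega F^*(\omega,\Gamma)$, where the product ranges over the components $\omega$ of $G$, and the problem splits into evaluating the single-component count in the balanced and unbalanced cases.

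If $\omega$ is unbalanced, then after switching $\omega$ into a form carrying as few negative edges as possible (which leaves the flow count unchanged, since switching induces a bijection between the $\Gamma$-flows of a signed graph and those of its switching) the graph $\omega-E_N$ is connected and a spanning tree of positive edges exists, so part 1) of Theorem \ref{partition} applies and gives $F^*(\omega,\Gamma)=|{\cal S}_\omega|=2^{\epsilon(\Gamma)}k^{m(\omega)-n(\omega)}=2^{\epsilon(\Gamma)}k^{\beta(\omega)}$, the last equality being the unbalanced case of the definition of $\beta$. If $\omega$ is balanced, then $\omega$ is switching-equivalent to an ordinary graph, so its flows are in bijection with the $\Gamma$-flows of a connected ordinary graph; the classical fact that the number of $\Gamma$-flows of a connected ordinary graph is $|\Gamma|$ raised to the cycle-space dimension $m-n+1$ then yields $F^*(\omega,\Gamma)=k^{m(\omega)-n(\omega)+1}=k^{\beta(\omega)}$, matching the balanced case of $\beta$ with no factor of $2^{\epsilon(\Gamma)}$.

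Finally I would assemble the product. Writing $\kappa(G)$ for the number of unbalanced components, the unbalanced factors together contribute $2^{\kappa(G)\epsilon(\Gamma)}$ while every component contributes $k^{\beta(\omega)}$, so
$$F^*(G,\Gamma)=2^{\kappa(G)\epsilon(\Gamma)}k^{\sum_\omega\beta(\omega)}=2^{\kappa(G)\epsilon(\Gamma)}k^{\beta(G)},$$
as claimed.

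As for difficulty, the genuine content is already carried by Theorem \ref{partition}, so the corollary is essentially bookkeeping. The one place needing care is the balanced case, where I must confirm that switching induces a bijection on \emph{all} $\Gamma$-flows, not merely on the nowhere-zero ones, so that the flow count is a switching invariant, and then correctly invoke the classical connected-graph flow count $k^{m-n+1}$, whose exponent is precisely the balanced value of $\beta(\omega)$. A minor but worthwhile check is that the multiplicativity step treats isolated vertices and trivial components (those with $\beta(\omega)=0$) correctly, since they each contribute a factor of $1$ and so do not disturb the final formula.
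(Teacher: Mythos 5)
Your proposal is correct and follows essentially the same route as the paper: reduce to connected components by multiplicativity, apply Theorem \ref{partition}(1) to each unbalanced component, and handle balanced components via switching-equivalence to an ordinary graph together with the classical count $k^{m-n+1}$. The extra care you take in justifying multiplicativity and the switching-invariance of the full flow count is sound but not treated differently in the paper, which states these steps without elaboration.
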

\begin{proof} If $G$ is not connected then $F^*(G,\Gamma)=\prod F^*(\omega,\Gamma)$, where the product is taken over all the components $\omega$ of $G$. We need only consider the case when $G$ is connected.

If  $G$ is unbalanced then (11) follows directly from (7). Now assume that  $G$ is balanced. Recall that a balanced signed graph is switching-equivalent to an ordinary graph. In this case it is known \cite{Jin} that  the number of $\Gamma$-flows (not necessarily nowhere-zero) in an ordinary graph is $k^{m-n+1}$, i.e., $F^*(G,\Gamma)=k^{m-n+1}$, where $m$ and $n$ are the numbers of edges and vertices in $G$, respectively. This agrees with (11) because $\kappa(G)=0$ and $\beta(G)=m-n+1$ when $G$ is balanced. The proof is completed. \end{proof}

\noindent{\bf Remark 1}. When $k$ (the order of $\Gamma$) is odd, Beck and Zaslavsky posed a problem (Problem 4.2, \cite{beck01}): Is there any significance to $F^*(G,\Gamma)$ evaluated at even natural numbers? By Theorem \ref{partition} and Corollary \ref{main} we can now give an answer to this problem. For simplicity, let's consider the case when $G$ is connected and unbalanced. Since $k$ is odd, we have $\epsilon(\Gamma)=0$ and therefore, $F^*(G,\Gamma)=k^{m-n}$. Thus, $F^*(G,\Gamma)$ evaluated at an even number $h$ equals $h^{m-n}$, which is exactly the number of the $\Gamma'$-flows  in $G$ divided by $2^{\epsilon(\Gamma')}$ for any group $\Gamma'$ of order $h$. More specifically, by Theorem \ref{partition}, $F^*(G,\Gamma)$ evaluated at $h$  equals the number of those $\Gamma'$-flows in $G$ which have the form
$${\bf f}=\gamma{\bf g}+\sum_{e\in \overline{T}_0}\gamma_e{\bf f}_{e},\ \ \gamma_e\in\Gamma',$$
where $\gamma$ is an arbitrary fixed element of order 2 in $\Gamma'$ (in particular we may choose $\gamma=0$). \hfill$\square$

For any $e\in E(G)$, the number of the $\Gamma$-flows in $G$ with value 0 at $e$ is clearly equal to $F^*(G-e,\Gamma)$. Notice that the flows counted by $F_d(G,x)$ are nowhere-zero. So by Corollary \ref{main} and the principle of inclusion-exclusion, we get the following expression of $F_d(G,x)$ obtained by Goodall et. al.:
\begin{cor} \cite{Goodall}\label{excl-incl} For any signed graph $G$ and non-negative integer $d$,
$$F_d(G,x)=\sum_{F\subseteq E}(-1)^{|F|}2^{\kappa(G-F)d}x^{\beta(G-F)}.$$
\end{cor}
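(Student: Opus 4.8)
The plan is to count the nowhere-zero $\Gamma$-flows by an inclusion-exclusion argument over the edges forced to take value zero, and then to replace each term by the closed form supplied by Corollary \ref{main}. Fix an Abelian group $\Gamma$ with $\epsilon(\Gamma)=d$ and $|\Gamma|=k$, and let $N(G,\Gamma)$ denote the number of nowhere-zero $\Gamma$-flows, so that $N(G,\Gamma)=F_d(G,k)$ by the defining property of $F_d$. For each edge $e\in E$, let $A_e$ be the set of $\Gamma$-flows in $G$ with ${\bf f}(e)=0$. A $\Gamma$-flow is nowhere-zero precisely when it lies in none of the sets $A_e$, so the principle of inclusion-exclusion gives
$$N(G,\Gamma)=\sum_{F\subseteq E}(-1)^{|F|}\Big|\bigcap_{e\in F}A_e\Big|.$$

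Next I would identify each intersection term. A $\Gamma$-flow in $G$ vanishing on every edge of $F$ is, after restriction to $E\setminus F$, exactly a $\Gamma$-flow in $G-F$: deleting edges carrying value $0$ does not disturb the conservation law at any vertex, and conversely any $\Gamma$-flow on $G-F$ extends to one on $G$ by assigning value $0$ to the edges of $F$. Hence $\big|\bigcap_{e\in F}A_e\big|=F^*(G-F,\Gamma)$, which generalizes the single-edge remark $|A_e|=F^*(G-e,\Gamma)$ already noted. Substituting, and then invoking Corollary \ref{main} in the form $F^*(G-F,\Gamma)=2^{\kappa(G-F)\epsilon(\Gamma)}k^{\beta(G-F)}$ together with $\epsilon(\Gamma)=d$, yields
$$N(G,\Gamma)=\sum_{F\subseteq E}(-1)^{|F|}2^{\kappa(G-F)d}k^{\beta(G-F)}.$$

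Finally I would upgrade this numerical identity to the desired polynomial identity. With $k$ replaced by the indeterminate $x$, the right-hand side is manifestly a polynomial in $x$, since the exponents $\beta(G-F)$ and the coefficients $(-1)^{|F|}2^{\kappa(G-F)d}$ depend only on $G$, $F$ and $d$. The displayed equation shows that this polynomial agrees with $F_d(G,x)$ at $x=k$ for every $k$ that arises as the order of some Abelian group $\Gamma$ with $\epsilon(\Gamma)=d$. Such orders are infinite in number — for instance $\Gamma=\mathbb{Z}_2^{d}\times\mathbb{Z}_m$ with $m$ odd realizes every value $k=2^{d}m$ — so two polynomials agreeing at infinitely many points must coincide, giving the stated formula for $F_d(G,x)$.

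The argument is essentially routine once Corollary \ref{main} is in hand; the only points requiring care are the verification that vanishing on all of $F$ corresponds bijectively to a $\Gamma$-flow on $G-F$ (rather than merely on a single deleted edge), and the final polynomial-agreement step, where one must confirm that infinitely many admissible orders $k$ exist for the fixed value of $d$, so that the two polynomials are forced to be identical.
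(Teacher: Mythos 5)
Your proposal is correct and follows essentially the same route as the paper: inclusion--exclusion over the edges forced to be zero, identifying each intersection term with $F^*(G-F,\Gamma)$, and then applying Corollary \ref{main}. The paper only sketches this in the sentence preceding the corollary; your additional care about the bijection with flows on $G-F$ and the final polynomial-identity step (infinitely many admissible orders $k$ for fixed $d$) fills in details the paper leaves implicit.
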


We note that, if $G$ is an ordinary graph then $\kappa(G-F)=0$ for any $F\subseteq E(G)$. Therefore, Corollary \ref{excl-incl} generalizes the corresponding result for ordinary graph \cite{Dong03,Jin}.

\noindent{\bf Example}. By Corollary \ref{excl-incl}, if $G$ is the graph with two vertices joined by a negative edge and a positive edge then $F_d(G,x)=2^d-1$; if $G$ is the graph consisting of two negative loops at a vertex then $F_d(G,x)=2^dx-2^{d+1}+1$; and if $G$ is the graph consisting of a negative loop and a positive loop at a vertex then $F_d(G,x)=(2^d-1)(x-1)$.
\section{Coefficients in $F_0(G,x)$}
In this section we will give a combinatorial interpretation of the coefficients in $F_d(G,x)$ for $d=0$. We begin with the following extension of Whitney's broken theorem given by Dohmen and Trinks.
\begin{lem}\label{broken thm} \cite{Dohmen01} Let $P$  be a finite linearly ordered set,  $\mathscr{B}\subseteq 2^P\setminus\{\emptyset\}$ and $\Gamma$  be an additive Abelian group. Let $f:2^P\rightarrow \Gamma$ be a mapping such that, for any $B\in\mathscr{B}$ and $A\supseteq B$,
\begin{equation}
f(A)=f(A\setminus\{B_{\max}\}).
\end{equation}
 Then
\begin{equation}
\sum_{A\in 2^P}(-1)^{|A|}f(A)=\sum_{A\in2^P\setminus \mathscr{B}^*}(-1)^{|A|}f(A),
\end{equation}
where  $B_{\max}$ is the maximum element in $B$ and $\mathscr{B}^* =\{A:A\in 2^P, A\supseteq B\setminus\{B_{\rm max}\}\ {\rm for\ some}\ B\in\mathscr{B}\}$.
\end{lem}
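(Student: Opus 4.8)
The plan is to prove the equivalent identity $\sum_{A\in\mathscr{B}^*}(-1)^{|A|}f(A)=0$, obtained by subtracting the right-hand side of~(13) from the left-hand side, since the two sums there differ precisely in the terms indexed by the sets in $\mathscr{B}^*$. I would establish this vanishing by induction on $|\mathscr{B}|$. The base case $\mathscr{B}=\emptyset$ is immediate: then $\mathscr{B}^*=\emptyset$ and the sum is empty.

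For the inductive step the idea is to peel off one carefully chosen member of $\mathscr{B}$. Among all $B\in\mathscr{B}$ I would pick a set $B_0$ whose maximum element $m_0:=B_{0,\max}$ is as large as possible, and set $\mathscr{B}'=\mathscr{B}\setminus\{B_0\}$. Since $\mathscr{B}'\subseteq\mathscr{B}$, the map $f$ still satisfies the hypothesis~(12) for every $B\in\mathscr{B}'$, so the induction hypothesis yields $\sum_{A\in{\mathscr{B}'}^*}(-1)^{|A|}f(A)=0$. As ${\mathscr{B}'}^*\subseteq\mathscr{B}^*$, putting $S=\mathscr{B}^*\setminus{\mathscr{B}'}^*$ reduces everything to proving $\sum_{A\in S}(-1)^{|A|}f(A)=0$; explicitly, $S=\{A:A\supseteq B_0\setminus\{m_0\}\text{ and }A\notin{\mathscr{B}'}^*\}$.

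To annihilate this residual sum I would exhibit a sign-reversing involution on $S$, namely $\phi(A)=A\,\triangle\,\{m_0\}$ (delete $m_0$ from $A$ if it is present, otherwise insert it). It is fixed-point-free and alters $|A|$ by exactly one, hence reverses the sign $(-1)^{|A|}$; and since the larger of $A$ and $\phi(A)$ contains $(B_0\setminus\{m_0\})\cup\{m_0\}=B_0$, applying hypothesis~(12) to $B_0$ gives $f(\phi(A))=f(A)$, so the two members of each pair cancel. One then checks that $\phi$ maps $S$ into $S$: containment of $B_0\setminus\{m_0\}$ is preserved because $m_0\notin B_0\setminus\{m_0\}$, and the requirement $A\notin{\mathscr{B}'}^*$ is preserved in the deletion direction because ${\mathscr{B}'}^*$ is upward closed.

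The main obstacle is the insertion direction, where adjoining $m_0$ to a set $A\notin{\mathscr{B}'}^*$ might a priori land inside ${\mathscr{B}'}^*$ and destroy the involution. This is exactly where the extremal choice of $B_0$ is needed. Indeed, if $A\cup\{m_0\}\in{\mathscr{B}'}^*$ while $A\notin{\mathscr{B}'}^*$, then some $B'\in\mathscr{B}'$ satisfies $B'\setminus\{B'_{\max}\}\subseteq A\cup\{m_0\}$ but $B'\setminus\{B'_{\max}\}\not\subseteq A$, which forces $m_0\in B'\setminus\{B'_{\max}\}$ and hence $m_0<B'_{\max}$, contradicting the maximality of $m_0$ among $\{B_{\max}:B\in\mathscr{B}\}$. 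Therefore $\phi$ is a well-defined, fixed-point-free, sign-reversing involution on $S$ that preserves $f$; the sum over $S$ vanishes and the induction closes.
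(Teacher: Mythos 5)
Your argument is correct. Note, however, that the paper does not prove this lemma at all: it is quoted verbatim from Dohmen and Trinks \cite{Dohmen01}, so there is no in-paper proof to compare against. Your proof is a clean, self-contained derivation: reducing to $\sum_{A\in\mathscr{B}^*}(-1)^{|A|}f(A)=0$, inducting on $|\mathscr{B}|$ by removing a member $B_0$ whose maximum $m_0$ is largest, and cancelling the residual sum over $S=\mathscr{B}^*\setminus{\mathscr{B}'}^*$ via the involution $A\mapsto A\,\triangle\,\{m_0\}$. All the delicate points are handled: $f(\phi(A))=f(A)$ follows from hypothesis~(12) applied to $B_0$ and $A\cup\{m_0\}\supseteq B_0$; stability of $S$ under deletion uses upward closure of ${\mathscr{B}'}^*$; and stability under insertion is exactly where the extremal choice of $m_0$ is used, since $B'\setminus\{B'_{\max}\}\subseteq A\cup\{m_0\}$ but $\not\subseteq A$ would force $m_0\in B'$ with $m_0<B'_{\max}$, contradicting maximality (this works even when several members of $\mathscr{B}$ share the same maximum, and in the degenerate case $B_0=\{m_0\}$, where ${S}$ consists of all $A\notin{\mathscr{B}'}^*$). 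The argument is essentially the standard involution underlying Whitney's theorem, organized as an induction; it is a legitimate alternative to simply citing \cite{Dohmen01}.
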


We call $\mathscr{B}$ in Lemma \ref{broken thm} a {\it broken system} of $f$ and $B\setminus\{B_{\max}\}$  a {\it broken set} for any $B\in\mathscr{B}$.

To apply Lemma \ref{broken thm} we need to define a broken system and broken sets for signed graphs. We  follow the idea of the notion of `bonds' introduced in \cite{Chen,Zaslavsky}. For a signed graph $G$ and $X\subseteq V(G)$, denote by $[X, X^C]$ the set of edges between $X$ and its complement $X^C$, by $G[X]$ the subgraph of $G$ induced by $X$, and by $E(X)$ the set of the edges in $G[X]$.  A non-empty edge subset $B\subseteq E(G)$ is called a {\it cut} \cite{Chen} or {\it improving set} \cite{Zaslavsky} of $G$ if it has the form $B=[X, X^C]\cup E_X$, where $X\subseteq V(G)$ is non-empty and $E_X\subseteq E(X)$ is minimal to have $G[X]-E_X$ balanced. A cut is called a {\it bond} of $G$ if it is minimal.
We note that, in the case when $G$ is balanced, we have $E_X=\emptyset$ by the minimality of $E_X$ and therefore, a bond is exactly a usual bond as in an ordinary graph. In this sense, the notion `bond' for signed graph is a very nice extension of that for ordinary graphs \cite{Jin}.

By the definition of the broken set, it is not difficult to see that if $B$ is a bond then, for any $e\in B$,
\begin{equation}
\beta(G-B)=\beta(G-(B\setminus\{e\})).
\end{equation}
On the other hand, by Corollary \ref{excl-incl}, we have
$$F_0(G,x)=\sum_{F\subseteq E}(-1)^{|F|}x^{\beta(G-F)}.$$

Thus, an edge subset of $G$ is a broken set of $F_0(G,x)$ if it has the form $B\setminus\{B_{\max}\}$ for some $B\subseteq E(G)$ such that, for any $A\supseteq B$,
\begin{equation}
\beta(G-A)=\beta(G-(A\setminus\{B_{\max}\})).
\end{equation}

 On the other hand, by (14), for any bond $B$ we have
  $$\beta(G-B)=\beta(G-(B\setminus\{B_{\max}\})).$$

 Moreover, it is not difficult to see that, for any $A\supseteq B$, (15) is satisfied by $A$ and $B$. Thus, $B\setminus\{B_{\max}\}$ is a broken set of $F_0(G,x)$ for any bond $B$ and is called a {\it broken bond} of $G$.

Let $\mathscr{B}$ be the class of all the broken bonds of $G$ and let
$$\mathscr{B}^*=\{F:F\in 2^{E(G)}, F\supseteq B\ {\rm for\ some}\ B\in\mathscr{B}\}.$$

Then by Lemma \ref{broken thm}, we have the following result immediately.
\begin{thm} \label{broken} For any signed graph $G$ with a linear order $\prec$ on  $E(G)$,
\begin{equation}
F_0(G,x)=\sum_{F\in 2^{E(G)}\setminus\mathscr{B}^*}(-1)^{|F|}x^{\beta(G-F)}.
\end{equation}
\end{thm}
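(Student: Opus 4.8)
The plan is to derive Theorem \ref{broken} as a single application of Lemma \ref{broken thm}, so that the actual work is to set up the correct instance of that lemma and to check its hypothesis. First I would take the finite linearly ordered set $P$ of the lemma to be $E(G)$ equipped with the given order $\prec$, take the ambient additive Abelian group to be the additive group of $\mathbb{Z}[x]$, and define $f:2^{E(G)}\to\mathbb{Z}[x]$ by $f(F)=x^{\beta(G-F)}$. With this choice, Corollary \ref{excl-incl} specialized to $d=0$ gives $\sum_{F}(-1)^{|F|}f(F)=F_0(G,x)$, which is precisely the left-hand side of the lemma's identity (13). For the broken system demanded by the lemma I would take $\mathscr{B}$ (in the lemma's sense) to be the collection of all bonds of $G$; the associated broken sets $B\setminus\{B_{\max}\}$ are then exactly the broken bonds introduced just before the theorem.

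The substantive step is verifying hypothesis (12), which for this $f$ reads $\beta(G-A)=\beta(G-(A\setminus\{B_{\max}\}))$ for every bond $B$ and every $A\supseteq B$; this is exactly equation (15). Equivalently, one must show that restoring the single edge $B_{\max}$ to $G-A$ does not alter $\beta$. I expect this to be the main obstacle, even though it is already recorded as (15). The cleanest route is to read $\beta(H)$ as the nullity $|E(H)|-r(E(H))$ of the frame (signed-graphic) matroid of $G$, which agrees with the definition (10) since the nullity of a spanning subgraph equals $m-n$ plus its number of balanced components; recalling that a bond is a cocircuit, the complement $E(G)\setminus B$ is a hyperplane, i.e.\ a closed flat. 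Since $E(G)\setminus A\subseteq E(G)\setminus B$ and $B_{\max}\in B$ lies outside this flat, $B_{\max}$ cannot belong to the closure of $E(G)\setminus A$; hence adjoining $B_{\max}$ raises the rank by one and leaves the nullity $\beta$ fixed. A more elementary alternative is a component-by-component analysis of the effect of adding $B_{\max}$ back (it either joins two components at least one of which is balanced, or closes an unbalanced circuit inside a balanced component), each case preserving $\beta$ directly from (10).

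Once (12) is in hand, Lemma \ref{broken thm} yields $F_0(G,x)=\sum_{F\in 2^{E(G)}\setminus\mathscr{B}^{*}}(-1)^{|F|}x^{\beta(G-F)}$, where the lemma's exceptional family is $\{A:A\supseteq B\setminus\{B_{\max}\}\ \text{for some bond}\ B\}$. The last thing I would do is reconcile this with the $\mathscr{B}^{*}$ appearing in the statement: because the broken bonds are by definition precisely the sets $B\setminus\{B_{\max}\}$ as $B$ ranges over bonds, the family $\mathscr{B}$ of broken bonds together with $\mathscr{B}^{*}=\{F:F\supseteq B'\ \text{for some broken bond}\ B'\}$ coincides with the lemma's exceptional family. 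This identification is purely bookkeeping, but worth making explicit, since the symbol $\mathscr{B}$ denotes the bonds inside the lemma and the broken bonds in the theorem; with it, (16) follows at once.
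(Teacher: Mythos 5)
Your proposal is correct and follows essentially the same route as the paper: instantiate Lemma \ref{broken thm} with $P=E(G)$, $f(F)=x^{\beta(G-F)}$, and the bonds as the broken system, then invoke Corollary \ref{excl-incl} for the left-hand side. The only difference is that you actually justify the hypothesis $\beta(G-A)=\beta(G-(A\setminus\{B_{\max}\}))$ for all $A\supseteq B$ (via the frame-matroid hyperplane argument or the component analysis), a step the paper dismisses with ``it is not difficult to see''; your verification is sound and arguably an improvement in rigor.
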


\noindent{\bf Remark 2}. If $G$ is balanced, then each broken bond is exactly a usual broken bond of an ordinary  graph. In this case, (16) is still valid. Thus, Theorem \ref{broken} is a generalization of that for ordinary graph \cite{Jin}. Further, in a very special case when an unbalanced signed graph $G$ contains an edge whose removal leaves a balanced graph, the empty set is a broken bond and therefore, any set of edges (including the empty set) contains a broken bond. This case means that $\mathscr{B}^*=2^{E(G)}$ and thus, $F_0(G,x)=0$, which coincides with an obvious fact that such $G$ is not $\Gamma$-flow admissible when $|\Gamma|$ is odd.  \hfill$\square$

\begin{prop}\label{prop} For any signed graph $G$ and $F\subseteq E(G)$, if  $F$ contains no broken bond then each component of $G-F$ is unbalanced, unless $G$ is balanced.
\end{prop}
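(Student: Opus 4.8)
The plan is to prove the contrapositive: assuming $G$ is not balanced, I will show that if some component $\omega$ of $G-F$ is balanced, then $F$ must contain a broken bond. Since every broken bond is a subset of a bond and, as I will argue, every cut contains a bond, it suffices to exhibit a single cut of $G$ that is entirely contained in $F$.

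First I would set $X=V(\omega)$ and record two facts. Because $\omega$ is a full component of $G-F$, no edge of $G-F$ joins $X$ to $X^C$; hence every edge of $[X,X^C]$ in $G$ was deleted, that is, $[X,X^C]\subseteq F$. Next, since $\omega=(G-F)[X]=G[X]-(F\cap E(X))$ is balanced, the set $F\cap E(X)$ is a balancing set for $G[X]$; choosing an inclusion-minimal $E_X\subseteq F\cap E(X)$ with $G[X]-E_X$ balanced furnishes a legitimate $E_X$ for the definition of a cut. One subtle point to verify here is that this minimality is genuine in the sense required by the definition: every proper subset of $E_X$ already lies in $F\cap E(X)$, so minimality within $F\cap E(X)$ coincides with minimality among all subsets of $E(X)$. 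This produces $B=[X,X^C]\cup E_X\subseteq F$.

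With $B$ a cut contained in $F$, I would extract a bond by taking an inclusion-minimal cut $B^{*}\subseteq B$ (such exists, as $B$ itself is a cut and the cuts contained in $B$ form a finite nonempty poset). Any proper sub-cut of $B^{*}$ would be a sub-cut of $B$, contradicting minimality; hence $B^{*}$ has no proper sub-cut and is a bond. Then $B^{*}\setminus\{B^{*}_{\max}\}$ is a broken bond contained in $B^{*}\subseteq F$, so $F$ contains a broken bond and the contrapositive is complete.

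The one delicate point, and the place where the exceptional clause ``unless $G$ is balanced'' is needed, is guaranteeing $B\neq\emptyset$, since a cut must be non-empty. We have $B=\emptyset$ exactly when $[X,X^C]=\emptyset$ and $G[X]$ is balanced. In that situation $X$ is a union of components of $G$; as $\omega$ is connected and spans $X$, $G[X]$ is a single balanced component of $G$, forcing $G$ itself to be balanced (for $G$ connected, $X=V(G)$). This is precisely the excluded case, so whenever $G$ is unbalanced the cut $B$ is non-empty and the argument goes through. I expect this degenerate empty-cut analysis, rather than the bond extraction, to be the main obstacle, and it is exactly the reason the exceptional clause must appear in the statement.
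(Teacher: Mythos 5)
Your proof is correct and follows the same basic route as the paper's: take $X=V(\omega)$ for a balanced component $\omega$ of $G-F$ and exhibit a bond inside $F$ whose associated broken bond then lies in $F$. The difference is one of rigor rather than strategy: the paper simply sets $B=[V(\omega),\overline{V(\omega)}]\cup E_F$ with $E_F=F\cap E(V(\omega))$ and asserts ``$B$ is a bond,'' whereas you supply the three missing verifications --- that $E_X$ can be chosen minimal inside $F\cap E(X)$ and that this minimality agrees with minimality in $E(X)$, that a genuine bond can be extracted from the resulting cut by passing to an inclusion-minimal sub-cut, and that $B\neq\emptyset$ precisely because $G$ is not balanced. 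These are real gaps in the paper's one-line argument (the set $E_F$ need not be a minimal balancing set, and $B$ need not be a minimal cut), so your version is the one that actually compiles into a proof. One caveat you already half-flag with ``for $G$ connected'': in the degenerate case $B=\emptyset$ your argument only shows that \emph{one component} of $G$ is balanced, which forces $G$ balanced only when $G$ is connected; for disconnected $G$ with a balanced component the statement itself can fail (e.g.\ an isolated vertex together with an unbalanced component all of whose bonds have size two), so this is a defect of the proposition as stated, shared by the paper's proof, rather than of your argument.
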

\begin{proof} To the contrary suppose that one component $\omega$ of $G-F$ is balanced. Let $B=[V(\omega),\overline{V(\omega)}]\cup E_F$, where $E_F$ is the set of edges in $F$ whose two end vertices are both in $\omega$. Then  $B$ is a bond since $\omega$ is balanced and thus $B\setminus\{B_{\max}\}$ is a broken bond. Notice that $B\setminus\{B_{\max}\}\subset B\subseteq F$, which contradicts that $F$ contains no broken bond.
\end{proof}

Let $\sigma(G)$ be the number of those edges $e$ such that there is an edge $e'$ with $e\prec e'$   satisfying one of the following three conditions: \\
1). one of $e$ and $e'$ is a cut edge and $G-\{e,e'\}$ has a balanced component;\\
2). $\{e,e'\}$ is an edge cut and $G-\{e,e'\}$ has a balanced component; \\
3). $\{e,e'\}$ is contained in a component $\omega$ of $G$ and $\omega-\{e,e'\}$ is balanced.

\begin{cor}\label{cor}  Let $G$ be an unbalanced, $\Gamma$-flow admissible ($|\Gamma|$ is odd) signed graph with $n$ vertices and $m$ edges. Then for any linear order $\prec$ on  $E(G)$,
\begin{equation}
F_0(G,x)=a_0x^{m-n}-a_1x^{m-n-1}+a_2x^{m-n-2}-\cdots+(-1)^{m-n}a_{m-n},
\end{equation}
where, for any $i\in\{0,1,\cdots,m-n\}$, $a_i$  is the number of the edge subsets of $G$ having $i$ edges and containing no broken bond as a subset. In particular,\\
1). $a_i>0$ for every $i=0,1,2,\cdots,m-n$;\\
2). $a_0=1$;\\
3). $a_1=m-\sigma(G)$;
\end{cor}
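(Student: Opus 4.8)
The plan is to read the exponents directly off the broken-bond expansion in Theorem~\ref{broken}. Fix the order $\prec$ and let $F$ range over the edge sets containing no broken bond, i.e. the sets indexing the surviving terms of~(16). By Proposition~\ref{prop}, since $G$ is unbalanced every component of $G-F$ is then unbalanced, so each such component $\omega$ contributes $\beta(\omega)=m(\omega)-n(\omega)$; summing over components and using that $G-F$ still spans all $n$ vertices gives
\[
\beta(G-F)=m(G-F)-n(G-F)=(m-|F|)-n=(m-n)-|F|.
\]
Because an unbalanced component has at least as many edges as vertices, $\beta(G-F)\ge 0$, and hence $|F|\le m-n$ for every $F$ containing no broken bond. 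Substituting this into Theorem~\ref{broken} and collecting the sets of common cardinality $|F|=i$ yields exactly the alternating expression~(17), with $a_i$ equal to the number of $i$-element edge sets containing no broken bond and with the sum terminating at $i=m-n$.

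Next I would pin down $a_0$ and $a_1$ by inspecting the small bonds. The only $0$-set is $\emptyset$, and $\emptyset$ contains a broken bond precisely when $\emptyset$ itself is a broken bond, that is, when $G$ has a bond of size one; by Remark~2 this is exactly the case that forces $F_0(G,x)\equiv 0$, which is excluded by $\Gamma$-flow admissibility ($|\Gamma|$ odd). Hence $\emptyset$ is admissible and $a_0=1$. For $a_1$, a singleton $\{e\}$ fails to be counted iff it contains a broken bond; since $\emptyset$ is not one, this happens iff $\{e\}$ is itself a broken bond, i.e. iff there is $e'$ with $e\prec e'$ such that $\{e,e'\}$ is a bond. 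The heart of this part is to prove that a two-element set $\{e,e'\}$ is a bond if and only if one of the three conditions defining $\sigma(G)$ holds. I would do this by writing a size-two bond as $B=[X,X^C]\cup E_X$ and splitting according to how many of its two edges lie in the crossing set $[X,X^C]$: two edges in $[X,X^C]$ with $E_X=\emptyset$ gives an edge cut leaving a balanced side (condition~2), one crossing edge together with one balancing edge gives a cut edge plus a balancing edge (condition~1), and zero crossing edges gives two balancing edges inside one component (condition~3). Counting the edges $e$ that occur as the $\prec$-minimum of such a pair then gives exactly $\sigma(G)$ forbidden singletons, so $a_1=m-\sigma(G)$.

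For the positivity statement~1) I would first record that the family of broken-bond-free sets is closed under taking subsets: if $F'\subseteq F$ and $F'$ contained a broken bond $B\setminus\{B_{\max}\}$, then $F$ would contain it as well. Thus these sets form a simplicial complex, and it suffices to exhibit a single face of the top size $m-n$, for then all of its subsets are faces and $a_i\ge\binom{m-n}{i}>0$ for every $0\le i\le m-n$. Such a top face is the complement of a spanning subgraph each of whose components is unbalanced and unicyclic (which exists because $G$ is connected and unbalanced); choosing the removed edges greedily in decreasing $\prec$-order, refusing any edge that would complete a broken bond, produces a maximal broken-bond-free set, and flow admissibility (no singleton bond) prevents the process from collapsing. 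Equivalently, this is the homogeneity of the broken-bond complex proved in the final section, which guarantees a facet of full dimension $m-n$.

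The main obstacle is exactly this last existence assertion. Whereas $a_0$ and $a_1$ follow from a direct examination of bonds of size one and two, the positivity of each intermediate coefficient is equivalent to the broken-bond complex possessing a facet of full dimension $m-n$ rather than some smaller maximal face, so the genuine work lies in the construction (or the homogeneity argument) producing a maximal broken-bond-free set of size exactly $m-n$. The two-element bond classification needed for part~3 is a second point requiring care, but it is a routine case analysis with no conceptual difficulty.
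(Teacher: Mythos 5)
Your derivation of the expansion (17) from Theorem \ref{broken} and Proposition \ref{prop} is exactly the paper's argument, and your treatment of $a_0$ and of $a_1$ (via the classification of two-element bonds $B=[X,X^C]\cup E_X$ according to how many of the two edges cross the cut) matches the paper, which in fact gives less detail on the two-element case than you do. The problem is part 1), and you have correctly diagnosed it yourself: everything hinges on exhibiting a broken-bond-free set of size exactly $m-n$, and your proposal does not actually produce one. Taking ``the complement of a spanning subgraph each of whose components is unbalanced and unicyclic'' does not work as stated, because an arbitrary such complement may well contain a broken bond; and the greedy removal you describe yields a \emph{maximal} broken-bond-free set but nothing you say forces it to have cardinality $m-n$ rather than something smaller. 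Appealing to the homogeneity of $\mathfrak{F}(G)$ from the final section is circular, since the proof of Corollary \ref{simplex} explicitly reuses ``the same argument as that in Corollary \ref{cor}.''

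The paper closes this gap as follows. Let $F^*$ be a broken-bond-free set of \emph{maximum} cardinality (equivalently, $E(G)\setminus F^*$ meets every broken bond, and $F^*$ is maximum with this property). By Proposition \ref{prop} each component $\omega$ of $G-F^*$ contains an unbalanced circuit $C_u$. If $\omega$ contained a second circuit $C\neq C_u$, then $F^*\cup\{C_{\max}\}$ would still be broken-bond-free: any bond containing $C_{\max}$ must contain a second edge of $C$, necessarily $\prec C_{\max}$, so $E(G)\setminus(F^*\cup\{C_{\max}\})$ still meets every broken bond. This contradicts maximality, so every component of $G-F^*$ is unicyclic, giving $m(G-F^*)=n$ and $|F^*|=m-n$; downward closure then yields $a_i>0$ for all $0\le i\le m-n$. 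The direction of the argument is the reverse of yours: one starts from a maximum broken-bond-free set and deduces the unicyclic structure of its complement, rather than starting from the structure and hoping the complement is broken-bond-free. With this step supplied your proof would be complete; without it, part 1) is unproven.
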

\begin{proof} Let $F\subseteq E(G)$ be an edge subset that contains no broken bond. Since $G$ is unbalanced, so by Proposition \ref{prop}, every component $\omega$ of $G-F$ is unbalanced. Thus,  $\beta(\omega)=m(\omega)-n(\omega)$ due to (11). Therefore,
$$\beta(G-F)=\sum_{\omega}\beta(\omega)=m(G-F)-n(G-F)=m-n-|F|,$$
where the sum is taken over all the components of $G-F$. This equation means that the value of $\beta(G-F)$ is determined uniquely by the number of edges in $F$, as long as $F$ contains no broken bond. So by Theorem \ref{broken},  the coefficient of $(-1)^ix^{m-n-i}$ in $F_0(G,x)$ counts exactly those edge subsets $F$ which have $i$ edges and contain no broken bond. Thus, (17) follows directly.

\noindent 1). We first show that there is an edge set $F$ with $n$ edges that contains no broken bond. By the definition of the broken bond,  an edge set $F$ contains no broken bond if and only if $E(G)\setminus F$ contains at least one edge from each broken bond of $G$. Let $F^*$ be maximum such that $E(G)\setminus F^*$ contains at least one edge from each broken bond of $G$ (such $F^*$ clearly exists because $E(G)\setminus\emptyset$ does). Let $\omega $ be a component of $G-F^*$. Then by Proposition \ref{prop}, $\omega$ contains at least one unbalanced circuit, say $C_u$. We claim that $\omega$ does not contain any other circuit.

Suppose to the contrary that $C$ is a circuit in $\omega$ with $C\not=C_u$. Since $C$ is a circuit,  the property that $G-F^*$  contains at least one edge from each broken bond is still satisfied by $G-F^*-C_{\max}$ because any bond containing $C_{\max}$ must contain another edge $e$ on $C$ with, of course, $e\prec C_{\max}$.  This contradicts our assumption that $F^*$ is  maximum. Our claim follows.

In a word, each component $\omega$ of $G-F^*$ contains exactly one unbalanced circuit and no any other circuit. This means that $m(\omega)=n(\omega)$ and therefore, $m(G-F^*)=n$, i.e., $|F^*|=m-n$. Thus, $a_{m-n}>0$. Further, if an edge subset $F$ contains no broken bond then any subset of $F$ contains neither broken bond, which implies $a_i>0$ for any $i$ with $0\leq i\leq m-n$.

\noindent 2). Since $G$ is flow-admissible, as pointed out in Remark 1, $G$ contains no edge whose removal leaves a balanced graph. This means that the empty set is not a broken bond. Thus, $a_0$ equals the number of the edge subsets of $G$ having $0$ edges, that is, the unique empty set.

\noindent 3). Now we consider the coefficient $a_1$. From the above discussion we see that $a_1$ equals the number of the edges that are not broken bond. On the other hand, an edge $e$ is a broken bond if there is $e'$ such that $B=\{e,e'\}$ is a bond and $e'=B_{\max}$. By the definition of a bond, $B=\{e,e'\}$ must satisfy one of the above three conditions and, vice versa.
\end{proof}

\section{Applications}

An ordinary graph can be viewed as a signed graph that contains no any unbalanced circuit. Oppositely, our first application is to consider a class of the signed graphs which are $\Gamma$-flow admissible for $|\Gamma|$ odd but contain no any balanced circuit.

For a tree $T$, let $G_T$ be the signed graph obtained from $T$ by replacing each of its end vertices (the vertices of degree 1) with an unbalanced circuit. It is clear that $G_T$ contains no balanced circuit.

 Let $v_1,v_2,\cdots,v_p$ be the vertices in $T$ that have degree at least 3 and let $d_1,d_2,\cdots,d_p$ be their degrees, respectively. Choosing an arbitrary vertex $r$ of $T$ as the root, we get a rooted tree (here the `rooted tree' is not the same thing as the `signed rooted tree' defined earlier). For a vertex $v_i$ (with degree at least 3) and an edge $e$ incident with $v_i$, we call $e$ {\it the father} of {\it the family} $v_i$ if $e$ is nearer to the root than other edges incident with $v_i$ and call every edge other than the father a {\it child} of the family $v_i$. In particular, we call the set of all the children of $v_i$ {\it the children class} of $v_i$ and denote it by $C(v_i)$.

 Let $\prec$ be an ordering on $E(G_T)$ such that no child is greater than its father and no edge on an unbalanced circuit is greater than one on $T$.
 Let $F$ be an edge set  of $G_T$ that contains no broken bond.  By Corollary \ref{cor},  $F$ contribute $(-1)^{|F|}x^{m-n-|F|}$ to $F_0(G_T,x)$, where $m=|E(G_T)|,n=|V(G_T)|$. On the other hand, by our definition of $\prec$, $F$ contains no broken bond if and only if $F$ contains neither an edge from an unbalanced circuit nor a children class of a family. For any vertex $v_i$, let $F_i=F\cap C(v_i)$. In particular, let $F_r=F\cap \{e_r\}$, where $e_r$ is the unique edge incident with the root $r$. Thus, the contribution of $F$  to $F_0(G_T,x)$ can be specified as
 \begin{equation}
 x^{m-n}(-1)^{|F_r|}x^{-|F_r|}\prod_{i=1}^p(-1)^{|F_i|}x^{-|F_i|}.
 \end{equation}
  On the other hand, we notice that $m-n=(d_1-2)+(d_2-2)+\cdots+(d_p-2)+1$.  Rewrite (18) as
 $$(-1)^{|F_r|}x^{1-|F_r|}\prod_{i=1}^p(-1)^{|F_i|}x^{d_i-2-|F_i|}.$$
 This means that $(-1)^{|F_r|}x^{1-|F_r|}$ and $(-1)^{|F_i|}x^{d_i-2-|F_i|}$ could be regarded as the contribution of $F$ restricted on $\{e_r\}$ and $C(v_i)$, respectively. Since $F\cap\{e_r\}=\emptyset$ or $F\cap\{e_r\}=\{e_r\}$, all the possible contributions of $F$ restricted on $\{e_r\}$ can be represented as $(-1)^{|\emptyset|}x^{1-|\emptyset|}+(-1)^{|\{e_r\}|}x^{1-|\{e_r\}|}=x-1$.

 In general, for any $v_i$, since $v_i$ has exactly $d_i-1$ children, all the possible contributions of $F$ restricted on $C(v_i)$ equals $$x^{d_i-2}-{{d_i-1}\choose{1}}x^{d_i-3}+\cdots+(-1)^{d_i-2}{{d_i-1}\choose{d_i-2}}.$$
 Thus, the total contributions of all $F$  that contains no broken bond equals
$$F_0(G_T,x)=(x-1)\prod_{i=1}^{p}(x^{d_i-2}-{{d_i-1}\choose{1}}x^{d_i-3}+\cdots+(-1)^{d_i-2}{{d_i-1}\choose{d_i-2}}).$$

Our second application is to show that all the broken bonds in a signed graph form a nice topological structure, namely, the homogeneous simplicial complex.
A finite collection $\mathscr{S}$  of finite sets is called a {\it simplicial complex} if $S\in \mathscr{S}$ implying $T\in \mathscr{S}$ for any $T\subseteq S$. A simplicial complex is {\it homogeneous} \cite{Wilf01} or {\it pure} \cite{Brylawski01} if all  the maximal simplices have the same dimension (cardinality). A classic example of homogeneous simplicial complex related to a graph is  the  broken-circuit complex \cite{Brylawski01,Brylawski02}. It was shown  \cite{Wilf01} that the class $\mathfrak{B}(G)$ consisting of all the edge subsets of an ordinary graph $G$ that contain no broken circuit is a homogeneous simplicial complex of top dimension $|V(G)|-1$ and, moreover, the coefficients of the chromatic polynomial of $G$ are the simplex counts in each dimension of  $\mathfrak{B}(G)$.

Let  $\mathfrak{F}(G)$ be the class consisting of all the edge subsets of a signed graph $G$ that contain no broken bond.
\begin{cor}\label{simplex}  Let $G$ be a non-trivial signed graph with $n$ vertices, $m$ edges and with a linear order $\prec$ on  $E(G)$.  Then \\
1). $\mathfrak{F}(G)$ is a  homogeneous  simplicial complex, i.e., every simplex is a subset of some simplex of top dimension $m-n$;\\
2). An edge set $F$ is a simplex of top dimension $m-n$ of $\mathfrak{F}(G)$ if and only if $E(G)\setminus F$ contains at least one edge from each broken bond of $G$ and each component $G-F$ contains no but exactly one unbalanced circuit;\\
3). For each $i\in\{0,1,2,\cdots,m-n\}$, the coefficient $a_i$ in $F_0(G,x)$ is the number of the $i$-dimensional simplexes in $\mathfrak{F}(G)$.
\end{cor}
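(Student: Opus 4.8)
The plan is to read off all three parts from the structural analysis already carried out in the proof of Corollary~\ref{cor}, working throughout under the hypotheses there; the only genuinely new ingredient is the passage from a \emph{maximum} to an arbitrary \emph{maximal} broken-bond-free set, which is what yields homogeneity. That $\mathfrak{F}(G)$ is a simplicial complex is immediate from the definition of a broken bond: if $F$ contains no broken bond and $F'\subseteq F$, then $F'$ contains no broken bond either (one contained in $F'$ would lie in $F$), so $\mathfrak{F}(G)$ is closed under taking subsets. Thus the real content of part 1) is homogeneity and that of part 2) is the description of the facets.

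I would next prove part 2) by an edge--vertex count, noting first that the condition ``$E(G)\setminus F$ meets every broken bond'' is just the requirement $F\in\mathfrak{F}(G)$. By Proposition~\ref{prop} every component $\omega$ of $G-F$ is unbalanced whenever $F$ contains no broken bond, hence carries a circuit and satisfies $m(\omega)\ge n(\omega)$, with equality exactly when $\omega$ is unicyclic. Since $n(G-F)=n$, summing gives $|F|=m-\sum_{\omega}m(\omega)\le m-n$, with equality precisely when every component of $G-F$ is unicyclic; as each such component is unbalanced, its unique circuit is the unique unbalanced circuit. This simultaneously delivers the count $|F|=m-n$ and the characterization asserted in part 2).

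The main step, and the main obstacle, is homogeneity: I must show that \emph{every} maximal broken-bond-free set has exactly $m-n$ edges, whereas the proof of Corollary~\ref{cor} only treats a maximum one. The key observation is that the extension argument used there does not require maximality: if some component $\omega$ of $G-F$ has $m(\omega)>n(\omega)$, so that it carries a circuit besides its unbalanced one, then adjoining the $\prec$-largest edge $C_{\max}$ of a suitable circuit $C\subseteq\omega$ again yields a broken-bond-free set, because any bond meeting $C$ meets it in at least two edges, one of which precedes $C_{\max}$ and so survives as a representative of every affected broken bond; hence $F$ is not maximal. The delicate point is that the statement ``a bond meets a circuit in at least two edges'' is the matroid circuit--cocircuit property, which in a signed graph holds only when $C$ is a genuine signed circuit (a balanced circuit or a barbell); so the argument must first extract such a signed circuit from a component with $m(\omega)>n(\omega)$ and verify that removing its maximal edge leaves the component unbalanced. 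Conceptually this is exactly the classical fact that a broken-circuit complex is pure of dimension one less than the rank, applied to the dual of the frame matroid of $G$; the obstacle is purely in transcribing that fact into the signed-graph language used here. Once it is in place, every maximal $F$ has all components of $G-F$ unicyclic and hence, by the count above, exactly $m-n$ edges, so $\mathfrak{F}(G)$ is homogeneous of top dimension $m-n$.

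Finally, part 3) is immediate from Corollary~\ref{cor}: the $i$-dimensional simplexes of $\mathfrak{F}(G)$ are precisely the $i$-edge subsets containing no broken bond, and their number is the coefficient $a_i$ of $(-1)^{i}x^{m-n-i}$ in $F_0(G,x)$.
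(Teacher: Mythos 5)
Your proposal follows the same route as the paper: parts 1) and 3) are read off from the definitions and from Corollary~\ref{cor}, part 2) comes from the edge--vertex count via Proposition~\ref{prop}, and homogeneity is obtained by greedily extending a broken-bond-free set $F$ one edge at a time. The one place where you deviate is the ``delicate point'' you flag, and you are right to insist on it: the orthogonality $|B\cap C|\ge 2$ between a bond $B$ and a circuit $C$ holds only when $C$ is a \emph{signed} circuit (a balanced circuit or a barbell), whereas the paper's own extension step takes an arbitrary second graph circuit $C'$ in a component $\omega$ of $G-F$ and adds an edge of $C'$. That step can genuinely fail: let $G$ consist of two unbalanced digons $\{a_1,a_2\}$ and $\{c_1,c_2\}$ joined by a positive edge $p$, with the order $a_1\prec a_2\prec c_1\prec c_2\prec p$. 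Then $m-n=1$ and the broken bonds are exactly $\{a_1\},\{a_2\},\{c_1\},\{c_2\}$ (for instance $\{c_2\}$ arises from the bond $\{p,c_2\}$), so starting from $F=\emptyset$ no edge of either digon can be adjoined; the only admissible extension is $p$, the maximal edge of the barbell, which lies on neither graph circuit. So your more careful formulation is not pedantry but a needed correction. The one step you leave as ``transcription'' should actually be written out, though it is short: a connected unbalanced component $\omega$ with $m(\omega)>n(\omega)$ has more edges than the rank $n(\omega)$ of its frame matroid, hence $E(\omega)$ is dependent and contains a frame-matroid circuit, i.e.\ a balanced circuit or barbell $C\subseteq\omega$; adjoining $C_{\max}$ to $F$ then preserves broken-bond-freeness by exactly the cocircuit--circuit argument you give, and the resulting components are again unbalanced by Proposition~\ref{prop}, so the extra verification you worry about is automatic. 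With that sentence added, your proof is complete, and at the key step it is more careful than the one in the paper.
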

\begin{proof} 1). It is obvious that $\mathfrak{F}(G)$ is a simplicial complex. We prove that  $\mathfrak{F}(G)$ is homogeneous.

Let $F$ be a set of edges that contains no broken bond. If $|F|=m-n$ then we are done. We now assume that $|F|<m-n$, i.e., $|E(G-F)|>n$. In this case, it can be seen that there is a component $\omega$ in $G-F$ which contains at least two circuits $C$ and $C'$. By Proposition \ref{prop}, one of these two circuits, say $C$, is unbalanced. So by the same argument as that in Corollary \ref{cor}, we can find an edge $e$ in $C'$ such that $G-F-e$ still contains an edge from each broken bond. Replacing $F$ by $F\cup\{e\}$, the assertion follows by repeating  this procedure, until $|F|=m-n$.

2) and 3) follows directly by  Corollary  \ref{cor}.   \end{proof}

\section*{Acknowledgments}
This work was supported by the National Natural Science Foundation of China [Grant numbers, 11471273, 11561058].

\end{document}